\newcommand{\F}{\mathcal{F}}
\newcommand{\Q}{\mathcal{Q}}
\renewcommand{\H}{\mathcal{H}}
\renewcommand{\P}{\mathcal{P}}
\newcommand{\V}{\mathcal{V}}
\newcommand{\foc}{\mathfrak{foc}}
\newcommand{\Inn}{\operatorname{Inn}}
\newcommand{\Aut}{\operatorname{Aut}}
\newcommand{\Out}{\operatorname{Out}}
\newcommand{\Hom}{\operatorname{Hom}}
\newcommand{\id}{\operatorname{id}}
\renewcommand{\phi}{\varphi}
\newcommand{\gen}[1]{\langle #1 \rangle}
\theoremstyle{theorem}
\newtheorem{theorem}{Theorem}[section]
\newtheorem{lemma}[theorem]{Lemma}
\theoremstyle{definition}
\newtheorem{definition}[theorem]{Definition}
\theoremstyle{remark}
\newtheorem{remark}[theorem]{Remark}
\begin{document}
\title{Realizing finite groups as automizers}
\author{Sylvia Bayard} 
\email{sbayard@ucsc.edu}
\address{Mathematics Department\\UC Santa Cruz\\1156 High Street\\Santa Cruz, CA 95064}
\author{Justin Lynd}
\email{lynd@louisiana.edu}
\address{Department of Mathematics\\University of Louisiana at Lafayette\\Lafayette, LA 70504}
\thanks{J.L. was partially supported by NSF Grant DMS-1902152.}
\subjclass[2010]{Primary 20D45, Secondary 20D20, 20B25}
\date{\today}
\begin{abstract}
It is shown that any finite group $A$ is realizable as the automizer in a
finite perfect group $G$ of an abelian subgroup whose conjugates generate $G$.
The construction uses techniques from fusion systems on arbitrary finite
groups, most notably certain realization results for fusion systems of the type
studied originally by Park.
\end{abstract}

\maketitle

\section{Introduction}

Not every finite group is realizable as $\Aut(U)$ for some finite group $U$.
For example, no nontrivial cyclic group of odd order is the automorphism group
of a group.  We study here the realization of finite groups by automizers of
subgroups of finite groups. That is, given a finite group $A$, we study when it
is possible to find a finite group $G$ and a subgroup $U \leq G$ such that $A
\cong \Aut_G(U) = N_G(U)/C_G(U)$.  As it stands, the answer to this question is
``always possible'' for trivial reasons: choose a faithful action of $A$ on an
elementary abelian $p$-group $U$ (for some prime $p$), and take for $G$ the
semidirect product of $U$ by $A$.  In this case, $U$ is normal in $G$. Our main
result shows that it is possible to realize $A$ as $\Aut_G(U)$ where $U$ is
very far from being normal.

\begin{theorem}
\label{T:main}
For each finite group $A$, there exist a finite perfect group $G$ and a
homocyclic abelian subgroup $U$ of $G$ such that $\left< U^G\right> = G$ and
$\Aut_G(U) \cong A$. 
\end{theorem}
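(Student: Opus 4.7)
\emph{Setup of $U$ and the semidirect product.} Given $A$, choose a prime $p$ and a faithful $\FF_p[A]$-module $V$, e.g.\ the regular representation; set $U$ to be $V$, or more generally a direct sum of copies of a $\ZZ/p^k$-lift so that $U$ is homocyclic abelian of the desired shape. Then $A \hookrightarrow \Aut(U)$, and in the semidirect product $L := U \rtimes A$ one has $C_L(U) = U$ and $\Aut_L(U) = A$ for free. However, $U$ is normal in $L$, so $\gen{U^L} = U$, which is very far from all of $L$; moreover $L$ is not perfect. The remainder of the argument is devoted to enlarging $L$ so that $U$ acquires many conjugates, while both preserving $\Aut_G(U) = A$ and ensuring $G$ is perfect.

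\emph{Fusion-theoretic enlargement and Park-type realization.} Work within the framework of fusion systems on arbitrary (not necessarily $p$-)finite groups, as alluded to in the abstract. Construct a saturated fusion system $\F$ on a suitable finite overgroup $S$ of $L$ -- built, for instance, by amalgamating copies of $L$ along $U$ or by adjoining controlled outer morphisms -- with the properties that (i) $\Aut_\F(U) = A$ and (ii) $U$ is $\F$-conjugate to many pairwise distinct subgroups of $S$ whose collection forces the realizing group to have rich global structure. Then invoke the Park-type realization theorem to produce a finite group $G$ containing $S$ with $\F_S(G) = \F$; by construction $\Aut_G(U) = A$, and the $\F$-conjugates of $U$ become honest $G$-conjugates.

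\emph{Perfectness, generation, and the main obstacle.} To achieve $\gen{U^G} = G$, replace $G$ by $\gen{U^G}$: this is a normal subgroup of $G$ that still contains $U$ with the same normalizer and centralizer, so the automizer condition is preserved. To force $G$ perfect, either iterate the derived-subgroup construction while checking that $U$ survives with automizer $A$ at each stage, or design $\F$ from the outset so that the realizing $G$ has only non-abelian simple composition factors (for example, by controlling the essential subgroups of $\F$). The main difficulty is the simultaneous control of three competing conditions: keeping $\Aut_G(U) = A$ so that no extraneous symmetry of $U$ appears; furnishing enough $G$-conjugates of $U$ to generate $G$; and ensuring $G$ is perfect. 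The Park-type realization of fusion systems on arbitrary finite groups is the key tool that bridges the combinatorial and group-theoretic sides, and pinning down a single $\F$ that delivers all three properties at once is the crux of the argument.
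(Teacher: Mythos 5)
Your proposal identifies the right toolbox — semidirect product $U \rtimes A$, fusion systems on arbitrary finite groups, Park-type realization — and this is indeed the strategy the paper uses. But there are genuine gaps, and one step as written is incorrect.

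The incorrect step is in your third paragraph: you propose to replace $G$ by $\gen{U^G}$ and assert that $U$ sits inside $\gen{U^G}$ ``with the same normalizer and centralizer, so the automizer condition is preserved.'' This is false in general: $N_G(U)$ and $C_G(U)$ need not be contained in $\gen{U^G}$, so $\Aut_{\gen{U^G}}(U) = \bigl(N_G(U) \cap \gen{U^G}\bigr)/\bigl(C_G(U) \cap \gen{U^G}\bigr)$ can be a proper subgroup of $\Aut_G(U)$. Likewise, the suggestion to ``iterate the derived-subgroup construction while checking that $U$ survives'' does not come with any mechanism ensuring that $U$ actually lands inside $G'$ or that the automizer is preserved, and there is no reason a priori for the iteration to stabilize at a perfect group still containing $U$ with the desired automizer.

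More fundamentally, you correctly flag ``pinning down a single $\F$'' as the crux, but the proposal does not actually carry it out, and the specific ingredients that make it work in the paper are absent. The paper builds $\F$ on $S = U \rtimes A$ (with $U = C_e^{|A|}\times C_e^{|A|}$, free $A$-action) as the fusion system generated by full automorphism groups of rank-2 homocyclic subgroups of order $e^2$, then proves three precise properties: $\Aut_\F(U) = A$, $\foc(\F) = S$, and $Q(\F) = 1$ (where $Q(\F)$ is the intersection of domains of nonextendable morphisms). These are exactly what the Park machinery converts into the conclusion: $\foc(\F) = S$ forces $S \leq G := \Gamma'$ where $\Gamma = \Aut({}_1 X) \cong S \wr \Sigma_n$, and $Q(\F) = 1$ forces $S \cap B = 1$ for $B$ the base subgroup, which is used both for perfectness (via a lemma on commutator subgroups of wreath products) and for showing $\gen{U^G} = G$ via a Bertrand's-postulate counting argument that exploits the index bound $|S:Q| > 2|A|$. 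None of this content — the focal subgroup condition, the control via $Q(\F)$ and semicharacteristic bisets, the wreath-product structure of the Park realization, or the arithmetic argument for generation — appears in your proposal. You also write ``saturated fusion system,'' but the construction here is a (non-saturated, non-$p$-group) fusion system on an arbitrary finite group; the paper explicitly remarks that it is unknown whether a saturated $p$-local version exists.
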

Here, we write $\gen{U^G}$ for the normal closure of $U$ in $G$, the subgroup
of $G$ generated by the $G$-conjugates of $U$. A group $G$ is perfect if it
coincides with its commutator subgroup. A homocyclic abelian group is a direct
product of isomorphic cyclic groups. 

We do not know whether more restrictions can be placed on $G$, up to and
including whether $G$ can be taken to be simple. Likewise, we do not know if
whether more restrictions can be placed on $U$, such as requiring $U$ to be an
elementary abelian $p$-group for some prime $p$.

Ultimately, the group $G$ is constructed fairly explicitly as the commutator
subgroup of a wreath product of the form $(U \rtimes A) \wr \Sigma_n$, but the
embedding of $U$ in $G$ is not an obvious one.  The method for constructing $G$
and the embedding of $U$ relies on certain basic constructions in fusion
systems on arbitrary finite groups. A fusion system on a finite group $S$ (not
necessarily a $p$-group) is a category with objects the subgroups of $S$, and
with morphism sets consisting of injective homomorphisms between subgroups,
subject to two weak axioms which we recall in Section~\ref{S:background}. The
standard example is the fusion system $\F_S(G)$ of the group $G$ on the finite
subgroup $S$ in which the morphisms are the $G$-conjugation homomorphisms
between subgroups of $S$. The most important ingredient in the construction
here is a result of Sejong Park realizing fusion on finite $p$-groups
\cite{Park2010, Park2016}, but which we require in the more general setting of
fusion systems on finite groups, where it is due to Warraich
\cite{Warraich2019}. 

\begin{theorem}[{\cite[Section~4]{Warraich2019}, c.f. \cite[Theorem~1.1]{Park2016}}]
\label{T:park}
For each fusion system $\F$ on a finite group $S$, there is a finite group $G$
containing $S$ as a subgroup and such that $\F = \F_S(G)$. 
\end{theorem}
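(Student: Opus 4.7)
The plan is to follow the strategy of Park in the $p$-group case: embed $G$ into a symmetric group in a controlled way so that its action on $S$ realizes exactly the prescribed fusion. I would break the argument into two stages: first, realize individual morphisms of $\F$; second, combine the constructions to handle all of $\F$ at once.

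For the first stage, given a single injective homomorphism $\phi \colon P \to S$ in $\F$, I would construct a finite group $G_\phi$ containing $S$ such that the fusion system $\F_S(G_\phi)$ is generated by $\phi$ together with $\F_S(S)$. The natural approach is to take an $S$-set $X_\phi$---for example, a disjoint union of coset spaces $S/P$ and $S/\phi(P)$, or two disjoint copies of $S$---with $S$ acting by left multiplication, and to introduce a permutation $\sigma_\phi \in \operatorname{Sym}(X_\phi)$ that identifies these $S$-sets via a $\phi$-twisted intertwiner. Setting $G_\phi := \gen{S, \sigma_\phi}$, conjugation by $\sigma_\phi$ then restricts to $\phi$ on an embedded copy of $P$.

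For the second stage, since $\F$ has only finitely many morphisms, choose a finite family $\phi_1, \ldots, \phi_n$ whose closure (under composition, restriction, and inversion, together with $\F_S(S)$) is all of $\F$. I would assemble the individual constructions by letting $S$ act diagonally on the disjoint union $X := X_{\phi_1} \sqcup \cdots \sqcup X_{\phi_n}$, and taking $G := \gen{S, \sigma_{\phi_1}, \ldots, \sigma_{\phi_n}} \leq \operatorname{Sym}(X)$. Since each $\sigma_{\phi_i}$ acts as the identity outside of $X_{\phi_i}$, the $\sigma_{\phi_i}$ only interact through their common action on $S$, and the analysis from the single-morphism case should extend to show $\F_S(G) = \F$.

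The principal obstacle is the verification, already in the first stage, that $G_\phi$ introduces no unintended fusion on $S$. A priori, conjugation by complicated words in $S$ and $\sigma_\phi$ could realize new isomorphisms between subgroups of $S$; one must analyze transporter sets in $\operatorname{Sym}(X_\phi)$ carefully to rule this out. In Park's original setting $S$ is a $p$-group and Sylow-theoretic arguments are available inside the ambient symmetric group. Warraich's extension to arbitrary finite $S$, which is what is needed here, requires reasoning directly with the coset actions, without recourse to Sylow theory, and this is where the bulk of the technical work should reside.
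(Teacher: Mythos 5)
Your proposal departs from the paper's approach, and the divergence matters: the step you flag as ``where the bulk of the technical work should reside'' is, in fact, the whole content of the theorem, and your scaffolding gives no mechanism to carry it out.

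The paper's proof (following Park and Warraich) does not build $G$ by adjoining one $\sigma_{\phi}$ per generating morphism and hoping the extra fusion can be ruled out by hand. Instead, it constructs a single $S$-$S$-\emph{biset} $X$ that is both $\F$-\emph{generated} (every transitive subbiset has stabilizer a twisted diagonal $\Delta(Q,\phi)$ with $\phi \in \F$) and left $\F$-\emph{stable} (${}_Q X \cong {}_\phi X$ as $Q$-$S$-bisets for all $\phi \in \Hom_\F(Q,S)$), and then takes $G = \Aut({}_1 X)$, the \emph{full} group of right-$S$-set automorphisms. The two biset conditions are exactly calibrated to force the two containments: $\F$-generatedness bounds $\F_{\iota(S)}(G)$ from above because any fusion realized in $G$ is read off from point stabilizers in $X$, which are all twisted diagonals in $\F$; left $\F$-stability bounds it from below because the isomorphism ${}_Q X \cong {}_\phi X$ is itself an element of $G$ conjugating $\iota(Q)$ to $\iota(\phi(Q))$ via $\phi$. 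Producing such a biset is nontrivial and requires the inductive fixed-point balancing argument in Theorem~\ref{T:leftsemichar}, where one adds rational multiples of orbits $(S\times S)/\Delta$ so that fixed-point counts become constant on $(\F \times \F_S(S))$-conjugacy classes of twisted diagonals.

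Your stage-one gadget $G_\phi = \gen{S, \sigma_\phi} \le \operatorname{Sym}(X_\phi)$ and the stage-two assembly both lack an analogue of these two conditions, and this is fatal rather than merely technical. Nothing in your setup prevents a word in $S$ and the $\sigma_{\phi_i}$ from conjugating one subgroup of $S$ to another by a map that is \emph{not} in $\F$; you acknowledge this but offer no tool to exclude it. The remark that ``the $\sigma_{\phi_i}$ only interact through their common action on $S$'' does not help, because $S$ is precisely the locus where unintended fusion would appear. Your diagnosis of Warraich's contribution is also off: the generalization to arbitrary finite $S$ does not proceed by abandoning the biset framework for direct coset analysis, but rather by observing (as the paper states) that the semicharacteristic biset construction ``does not depend on $S$ being a $p$-group'' --- the Sylow-theoretic arguments you recall are specific to Park's earlier treatment of \emph{saturated} fusion systems and are not used here. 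To repair your proposal, you would essentially have to rediscover the semicharacteristic biset and then note that taking $\Aut({}_1 X)$ rather than $\gen{S, \sigma_{\phi_1},\dots}$ is what makes the fusion analysis tractable.
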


\"Unl\"u and Yal\c{c}in also considered fusion systems on finite groups with an
eye on Park's result \cite[Section~5]{UnluYalcin2012}, although they did not
prove Theorem~\ref{T:park}. For the convenience of the reader, we provide a
sketch of proof of Theorem~\ref{T:park} which is modeled closely on Park's
proof for $S$ a $p$-group. For example, the $G$ of Theorem~\ref{T:park} is the
group of automorphisms as a right $S$-set of a certain $S$-$S$ biset associated
with $\F$, similarly as in \cite{Park2010} and \cite{Park2016}.

In order to use Theorem~\ref{T:park} to prove Theorem~\ref{T:main}, we need to
be able to construct a suitable finite group $S$ and fusion system $\F$ on $S$.
One consequence of the way this fusion system is built is the following result. 

\begin{theorem}
\label{T:realfus}
For each finite group $A$, there are a finite group $S$, a homocyclic abelian
subgroup $U$ of $S$, and a fusion system $\F$ on $S$ such that $\foc(\F) = S$,
$Q(\F) = 1$, and $\Aut_\F(U) \cong A$. 
\end{theorem}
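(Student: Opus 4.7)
The plan is to construct $S$, $U$, and $\F$ explicitly and then verify the three required properties. First, realize $A$ as a faithful subgroup of $\Aut(U)$ for some homocyclic abelian group $U$: this is always possible, for instance by taking the regular representation $U = (\ZZ/p^m\ZZ)^{|A|}$ with $A$ acting by coordinate permutation, for any prime $p$ and any $m \geq 1$. Forming $T = U \rtimes A$ then gives $\Aut_T(U) \cong A$ immediately, but $\F_T(T)$ does not satisfy the focal condition, since $T/U \cong A$ is in general nontrivial and the base $U$ is normal.

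Following the indication in the introduction that $G$ ultimately arises from a wreath product $(U \rtimes A) \wr \Sigma_n$, I would take $S$ to be (a suitable subgroup of) $T \wr \Sigma_n$ for a carefully chosen $n$, with $U$ identified as one distinguished copy in the base group. The fusion system $\F$ is then defined as the smallest fusion system on $S$ containing $\F_S(S)$ together with a short list of additional $\F$-isomorphisms between subgroups of $S$. These extra generators must accomplish two seemingly competing tasks: produce enough focal elements $x^{-1}\phi(x)$ to generate all of $S$, so that $\foc(\F) = S$; and fail to produce any new $\F$-automorphism of $U$ when closed under composition and restriction, so that $\Aut_\F(U)$ stays equal to the inner automorphism group $A$. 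The condition $Q(\F) = 1$ should then follow from the transitivity of the $\Sigma_n$-action on coordinates together with the extra generators, which together preclude any proper $\F$-invariant subgroup.

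The principal obstacle is precisely the tension between the focal and the automizer conditions: closure of the chosen generators under composition and restriction, as required by the fusion-system axioms, can easily create unwanted automorphisms of $U$. The wreath-product framework is what resolves this, because focal-generating morphisms can be placed on subgroups supported outside the distinguished copy of $U$---on other coordinates of the base group, or on subgroups built from $\Sigma_n$---so that their focal contributions fill out the quotient $S/[S,S]$ without ever acting on $U$. The main technical check is then that every composite of generators which happens to restrict to a self-map of $U$ in fact lies in $A$; verifying this is where the structure of the wreath product and the choice of $n$ become decisive, and it is also where the arrangement used to force $Q(\F) = 1$ must be checked for compatibility with the restriction $\Aut_\F(U) \cong A$.
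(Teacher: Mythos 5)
Your proposal misplaces the role of the wreath product, and this leads to a wrong architecture. In the paper, the wreath product $(U \rtimes A) \wr \Sigma_n$ appears only at the level of the \emph{group} $G$ in Theorem~\ref{T:main}, produced automatically by the Park embedding (Theorem~\ref{T:parkdetail}) applied to a fusion system $\F$ that lives on the much smaller group $S = U \rtimes A$. The group $S$ in Theorem~\ref{T:realfus} is \emph{not} a subgroup of any wreath product: it is just a single semidirect product, with $U = C_e^{|A|} \times C_e^{|A|}$ (note the squaring---the two-fold regular representation is used so that $C_U(A)$ has rank $\geq 2$, which the argument needs) and $e$ the exponent of $A$. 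By trying to define $\F$ on a subgroup of $T \wr \Sigma_n$ you are baking the Park step into the construction of $S$, which both overcomplicates the problem and makes it far harder to control $\Aut_\F(U)$.

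More fundamentally, you describe the tension between $\foc(\F) = S$ and $\Aut_\F(U) \cong A$ without actually resolving it: you never specify the ``short list of additional $\F$-isomorphisms,'' which is the whole content of the construction. The paper's resolution is a rank trick, not a coordinate-separation trick: take $\F = \gen{\Aut(V) \mid V \in \V}_S$ where $\V$ is the set of rank-$2$ homocyclic subgroups of $S$ of order $e^2$. Since $U$ has rank at least $4$ (because $A \neq 1$), no $V \in \V$ can contain a subgroup isomorphic to $U$, and the generation lemma for fusion systems then forces $\Aut_\F(U) = \Aut_S(U) \cong A$ with essentially no work. Meanwhile the $V \in \V$ supporting nonextendable automorphisms (built from one cyclic factor of $C_U(A) = Z(S) \cap U$ paired with a cyclic subgroup ``twisting'' into $A$) are shown to generate $S$ and intersect trivially, yielding $\foc(\F) = S$ and $Q(\F) = 1$ simultaneously. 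Your plan of ``placing focal-generating morphisms on subgroups supported outside the distinguished copy of $U$'' could in principle be made to work, but you would still have to prove that composites do not leak into $\Aut(U)$, and without the rank obstruction there is no clean mechanism to rule that out.
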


The definition of the focal subgroup $\foc(\F)$ of an arbitrary fusion system
is given in Section~\ref{S:background} and is identical to the definition for
fusion systems on $p$-groups. The definition of the subgroup $Q(\F)$ of $S$,
which is a sort of replacement for $O_p(\F)$ in a fusion system over an
arbitrary finite group when compared with a saturated fusion system over a
$p$-group, is also given there.

When $A$ is a $p$-group for some prime $p$, $S$ is also a $p$-group in the
construction we present.  But we do not know whether it is possible to choose
$\F$ to be a fusion system on a $p$-group independently of $A$ in
Theorem~\ref{T:realfus}, much less whether $\F$ can be taken to be a
\emph{saturated} fusion system on a $p$-group. 

A MathOverflow question of Peter Mueller asks \cite{NormSym}: is every finite
group of the form $N_{\Sigma_n}(U)/U$ for a subgroup $U$ of some finite
symmetric group $\Sigma_n$? This work arose out of an attempt to say something
about that question. 

Here is a brief outline of the paper and some remarks on notation. In
Section~\ref{S:background} we give some background on fusion systems and
semicharacteristic bisets and give a definition of $Q(\F)$. We also write down
a proof of the existence of semicharacteristic bisets for fusion systems on
arbitrary finite groups and provide a discussion of Theorem~\ref{T:park}. In
Section~\ref{S:realaut}, we prove a slightly more detailed version of
Theorem~\ref{T:realfus} and combine it with Theorem~\ref{T:park} to prove
Theorem~\ref{T:main}. We use left-handed notation for conjugation $x \mapsto {
}^gx = gxg^{-1}$. Our iterated commutators are right-associated: $[X,Y,Z] =
[X,[Y,Z]]$, etc. We sometimes write $G'$ for the commutator subgroup of a group
$G$. 

\section{Fusion systems on finite groups, semicharacteristic bisets, and the
Park embedding}\label{S:background}

\subsection{Fusion systems}\label{SS:fusion}
 
\begin{definition} 
Let $S$ be a finite group. A \emph{fusion system} on $S$ is a category
$\mathcal{F}$ with objects the set of subgroups of $S$, subject to the
following two axioms: for all $P, Q \le S$,
\begin{enumerate} 
\item $\textup{Hom}_\mathcal{F}(P, Q)$ consists of a set of injective
homomorphisms from $P$ to $Q$, including all such morphisms induced by
$S$-conjugation.
\item Each $\varphi \in \textup{Hom}_\mathcal{F}(P, Q)$ is the composite of an
$\mathcal{F}$-isomorphism from $P$ to $\varphi(P)$ and the inclusion from
$\varphi(P)$ to $Q$.  
\end{enumerate} 
\end{definition}
Axiom (1) implies that any inclusion $\iota_P^Q$ of subgroups $P \leq Q$ is a
morphism in $\F$ from $P$ to $Q$ (being conjugation by $1 \in S$). Therefore, a
morphism can be restricted to any subgroup of the source. Axiom (2) then
implies for example that the target of any morphism can be restricted to a
subgroup containing the image. 

If $G$ is a group and $S$ is a finite subgroup of $G$, there is a fusion system
$\F_S(G)$ of $G$ on $S$ with morphism sets $\Hom_G(P,Q) = \{c_g\colon t \mapsto
{ }^gt \mid { }^gP \leq Q\}$ consisting of the $G$-conjugation homomorphisms
mapping $P$ into $Q$.  This is the standard example of a fusion system.
Theorem~\ref{T:park} shows that indeed every fusion system on $S$ is of this
form, and $G$ can be taken finite. 

The notation $\Aut_\F(P)$ is short for $\Hom_\F(P,P)$ in a fusion system $\F$
on $S$.  When $\F = \F_S(G)$ for some group $G$ and $P \leq S$, then
$\Aut_\F(P) = \Aut_G(P)$ from the definitions. 

We introduce now several properties of subgroups and morphisms in a fusion
system that we will need, many of which are identical to their counterparts for
fusion systems on $p$-groups \cite{AschbacherKessarOliver2011, CravenTheory}.

\begin{definition}[Generation of fusion systems]
Let $S$ be a finite group and let $\mathfrak{X}$ be a set of injective
homomorphisms between subgroups of $S$. The \emph{fusion system on $S$
generated by $\mathfrak{X}$}, denoted $\gen{\mathfrak{X}}_S$, is the
intersection of the fusion systems on $S$ containing $\mathfrak{X}$.
\end{definition}

If $\F_1$ and $\F_2$ are two fusion systems on the finite group $S$, then the
category $\F_1 \cap \F_2$ with objects the subgroups of $S$ and with morphism
sets $\Hom_{\F_1 \cap \F_2}(P,Q) := \Hom_{\F_1}(P,Q) \cap \Hom_{\F_2}(P,Q)$ is
again a fusion system on $S$.  Thus, the definition makes sense.  As in the
case of fusion systems on finite $p$-groups, it is easy to see that an
injective group homomorphism is in $\gen{\mathfrak{X}}_S$ if and only if it can
be written as a composition of restrictions of homomorphisms in $\Inn(S) \cup
\mathfrak{X}$. 

\begin{definition}[Direct products]
\label{D:directproduct}
Let $S_1$ and $S_2$ be finite groups, and let $\F_i$ be a fusion system on
$S_i$, $i = 1, 2$. The direct product $\F_1 \times \F_2$ is the fusion system
over $S_1 \times S_2$ generated by the homomorphisms $(\phi_1, \phi_2) \colon
P_1 \times P_2 \to S_1 \times S_2$, where $\phi_i \in \Hom_{\F_i}(P_i, S_i)$.
\end{definition}

We also need the definition of the focal subgroup of a fusion system.

\begin{definition}[The focal subgroup]
Let $\F$ be a fusion system on the finite group $S$. The \textit{focal subgroup
of} $\F$ is the subgroup of $S$ generated by elements of the form $[\phi,s] :=
\varphi(s) s^{-1}$, where $s \in S$ and $\varphi\colon \gen{s} \to S$ is a
morphism in $\F$.
\end{definition}

\begin{remark}\label{R:focal}
By the Focal Subgroup Theorem \cite[7.3.4]{Gorenstein1980}, if $G$ is a finite
group and $S$ is a Sylow $p$-subgroup of $G$, then $\foc(\F_S(G)) = S \cap
[G,G]$. When $S$ is an arbitrary subgroup of $G$, there is the obvious
inclusion $\foc(\F_S(G)) \leq S \cap [G,G]$ since each generating element
$\phi(s)s^{-1} \in S$ is a commutator $gsg^{-1}s^{-1} = [g,s]$ for some $g \in
G$, but in general the reverse inclusion need not hold. 
\end{remark}

\subsection{Nonextendable morphisms and the subgroup $Q(\F)$}

\begin{definition}[Nonextendable morphisms]
Let $\F$ be a fusion system on the finite group $S$, and let $P, Q \leq S$.  A
morphism $\phi \in \Hom_\F(P,Q)$ is said to be \emph{nonextendable} if it does
not extend to a morphism defined on any subgroup of $S$ properly containing
$P$. That is, whenever $P \leq R \leq S$ and $\tilde{\phi} \in \Hom_\F(R,S)$ is
such that $\iota_Q^S \circ \phi = \tilde{\phi} \circ \iota_{P}^{R}$, then $R =
P$. 
\end{definition}

\begin{definition}\label{D:QF}
For a fusion system $\F$ on a finite group $S$, define $\Q(\F)$ to be the set
of all subgroups $Q$ of $S$ for which there is a nonextendable morphism $\phi
\colon Q \to S$ in $\F$, and let $Q(\F)$ be the intersection of the family
$\Q(\F)$.
\end{definition}

The relevance of the subgroup $Q(\F)$ will be seen later in
Lemma~\ref{L:capQi}.  By the same proof as for fusion systems on $p$-groups, if
$\F$ is a fusion system on a finite group $S$, there is a unique largest
subgroup $N$ of $S$ having the property that each morphism $\phi \colon P \to
Q$ in $\F$ extends to a morphism $\tilde{\phi}\colon PN \to QN$ with
$\tilde{\phi}|_N(N) = N$, which we might denote by $O_S(\F)$. (If $S$ is a
$p$-group, then this is the largest normal $p$-subgroup $O_p(\F)$ of $\F$.) It
follows from the definitions that $O_S(\F)$ is a subgroup of each member of
$\Q(\F)$, and so $O_S(\F) \leq Q(\F)$.  Thus, the requirement of $Q(\F) = 1$ in
Theorem~\ref{T:realfus} is stronger than a requirement of $O_S(\F) = 1$. 

\begin{remark}\label{R:QF}
The direct product $\Aut_\F(S) \times \Aut_\F(S)$ acts on the set of pairs
$(Q,\phi)$ consisting of a subgroup $Q \in \Q(\F)$ and a nonextendable morphism
$\phi \colon Q \to S$ via $(\alpha,\beta)\cdot (Q,\phi) = (\alpha(Q), \beta\phi
\alpha^{-1})$. In particular, $Q(\F)$ is $\Aut_\F(S)$-invariant.
\end{remark}

\subsection{Semicharacteristic bisets}\label{SS:bisets}
For a finite group $S$, an $S$-$S$-biset $X$ is a set with left and right
$S$-action such that $(ux)v = u(xv)$ for all $u, v \in S$, $x \in X$. An
$S$-$S$-biset can also be viewed as an $(S\times S)$-set via $(u,v)\cdot x =
uxv^{-1}$. For a subgroup $Q \le S$ and a homomorphism $\varphi\colon Q
\rightarrow S$, let
\begin{equation*}
S \times_{(Q, \varphi)} S = (S \times S)/\sim
\end{equation*}
where $(xu, y) \sim (x, \varphi(u)y)$ for $x, y \in S$, $u \in Q$, and let
$\left <x, y\right >$ be the equivalence class of $(x, y)$. The group action is
defined for $t \in S$ by $t\left<x,y\right> = \left<tx,y\right>$ and
$\left<x,y\right>t$ = $\left<x,yt\right>$. Importantly, $S \times_{(Q,
\varphi)} S$ is also isomorphic to $(S \times S)/\Delta(Q, \varphi)$ as
$S\times S$-sets, where $\Delta(Q, \varphi) := \{(u, \varphi(u)): u \in Q\}$.
We refer to $\Delta(Q,\varphi)$ as a \emph{twisted diagonal subgroup}.

\begin{definition}[{cf. \cite[Definition~1.2]{Park2016}}]
Let $\mathcal{F}$ be a fusion system on a finite group $S$. A \textit{left
semicharacteristic biset} for $\mathcal{F}$ is a finite $S$-$S$-biset $X$
satisfying the following properties.
\begin{itemize}
\item $X$ is $\mathcal{F}$\textit{-generated}, i.e., every transitive subbiset
of $X$ is of the form $S \times_{(Q, \varphi)} S$ for some $Q \le S$ and some
$\varphi \in \text{Hom}_\mathcal{F} (Q, S)$
\item $X$ is \textit{left} $\mathcal{F}$\textit{-stable}, i.e., $_Q X \cong
{}_\varphi X$ as $Q$-$S$-bisets for every $Q \le S$ and every $\varphi \in
\Hom_{\F}(Q, S)$, where ${}_\varphi X$ is the $Q$-$S$ biset whose
left action is induced by $\varphi$.
\end{itemize}
\end{definition}

In \cite{Park2016}, Park showed that each fusion system over a $p$-group has a
semicharacteristic biset. Then Warraich \cite{Warraich2019} extended this to
fusion systems on arbitrary finite groups. We give a proof here for the
convenience of the reader. 

\begin{theorem}\label{T:leftsemichar}
Let $\mathcal{F}$ be a fusion system on the finite group $S$.  Then there
exists a left semi-characteristic biset $X$ for $\mathcal{F}$, and $X$ can be
chosen to include at least one $S$-$S$ orbit of the form $S \times_{(S,\alpha)}
S$ for each $[\alpha] \in \Out_\F(S)$. 
\end{theorem}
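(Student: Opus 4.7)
The plan is to follow Park's iterative construction, adapted to arbitrary finite groups. Every transitive $\F$-generated $S$-$S$-biset is of the form $Y_{(Q,\psi)} := S \times_{(Q,\psi)} S$ with $\psi \in \Hom_\F(Q,S)$, and two such are isomorphic as $(S\times S)$-sets precisely when the twisted diagonals $\Delta(Q,\psi)$ are $(S\times S)$-conjugate. I build $X$ as a disjoint union $\bigsqcup c_{(Q,\psi)} Y_{(Q,\psi)}$ with nonnegative integer multiplicities (constant on $S$-conjugacy classes of pairs), with $c_{(S,\alpha)} \geq 1$ for a chosen representative of each class in $\Out_\F(S)$, and with the total union left $\F$-stable. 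The $\F$-generation condition is automatic from this form, so the real task is to arrange left $\F$-stability while preserving positivity.

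First, I handle the base of the induction by setting $c_{(S,\alpha)} = 1$ for one representative $\alpha$ of each class in $\Out_\F(S)$, which secures the final assertion of the theorem. With this choice, $_S X \cong \bigsqcup_{[\alpha]}\, {}_\alpha S$ as $S$-$S$-bisets, and for each $\phi \in \Aut_\F(S)$ the bijection $[\alpha] \mapsto [\alpha \phi]$ of $\Out_\F(S)$ yields the needed isomorphism $_S X \cong {}_\phi X$, so left stability at $S$ is automatic.

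I then process the remaining $\F$-conjugacy classes $[(Q,\psi)]$ in decreasing order of $|Q|$. The Mackey-type decomposition
\[
{}_\phi\bigl(S \times_{(R,\psi)} S\bigr) \;\cong\; \bigsqcup_{s \in \phi(Q)\backslash S / R}\, Q \times_{(Q_s,\, \psi_s)} S,
\]
where $Q_s := \phi^{-1}(\phi(Q) \cap {}^s R)$ and $\psi_s$ is the induced morphism obtained by conjugating into $R$ and applying $\psi$, shows that $_\phi Y_{(R,\psi)}$ produces only $Q$-$S$-orbits with left stabilizer of order at most $|R|$. Consequently, orbits added at the current stage only affect stability conditions at subgroups of order $\leq |Q|$, yielding the triangular structure needed for the induction. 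At each stage I match the defect in left stability at subgroups of order $|Q|$ by adding a nonnegative integer combination of $Y_{(Q,\psi)}$'s, preferring $\Aut_\F(Q)$-orbit sums over $\psi$ so that the added piece is manifestly compatible with the fusion system action at $Q$.

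The main obstacle will be verifying that these corrections can always be achieved with nonnegative multiplicities. A priori the defect at each stage is a signed combination of isomorphism classes of $Q$-$S$-bisets, and converting it to a genuine nonnegative combination requires that any negative contribution be canceled by further additions. The key observation is that a full $\Aut_\F(Q)$-orbit sum $\bigsqcup_{\beta \in \Aut_\F(Q)} Y_{(Q, \psi\beta)}$ has a left-restriction that is already invariant under the $\F$-action at $Q$, so adding enough such orbit sums to both sides of any stability imbalance eliminates sign issues without subtraction. This combinatorial bookkeeping, which in the $p$-group case is streamlined by Alperin's fusion theorem and mod-$p$ fixed-point congruences (as in Park's original argument), is the portion that requires in the general setting the direct constructive analysis supplied by Warraich.
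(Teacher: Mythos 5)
Your overall plan is the same as the paper's: start with $\sum_{[\alpha] \in \Out_\F(S)} S \times_{(S,\alpha)} S$ to secure the final clause and left stability at $S$, then descend through the subgroup lattice adding transitive $\F$-generated orbits to repair stability at smaller subgroups, using the ``triangular'' fact that restrictions of $S \times_{(R,\psi)} S$ along a morphism from $Q$ only produce left stabilizers of order at most $|R|$.  So the architecture is right, and the Mackey decomposition you write down is a legitimate (if more hands-on) way to see the triangularity that the paper instead extracts from counting fixed points of twisted diagonal subgroups.

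However, the step you identify as ``the main obstacle'' --- arranging nonnegativity of the multiplicities --- is precisely the place where your proposal is incomplete, and the fix you sketch does not actually close the gap.  Saying that a full $\Aut_\F(Q)$-orbit sum ``has a left-restriction already invariant under the $\F$-action at $Q$'' and that one can ``add enough such orbit sums to both sides'' is not a well-posed correction procedure: left $\F$-stability at $Q$ is a family of biset isomorphisms ${}_QX \cong {}_\phi X$, not a scalar (in)equality, and it is not clear what ``both sides'' means, nor why adding symmetric sums reconciles two sides of an isomorphism that may differ in genuinely asymmetric ways.  The paper resolves this cleanly and quite differently.  It reformulates left $\F$-stability as the condition that the fixed-point sizes $|X^{\Delta(P,\phi)}|$ be constant on $\F'$-conjugacy classes of twisted diagonal subgroups, where $\F' = \F \times \F_S(S)$; then at each inductive stage it takes a maximal $\F'$-conjugacy class $\P$ among the remaining twisted diagonals, finds the member $\Delta(P,\phi)$ of $\P$ with the \emph{largest} fixed-point count, and for every other $\Delta \in \P$ (up to $S\times S$-conjugacy) \emph{adds} the nonnegative rational multiple
\[
\frac{|X_0^{\Delta(P,\phi)}| - |X_0^{\Delta}|}{|N_{S\times S}(\Delta)/\Delta|}\,(S\times S)/\Delta
\]
to raise its fixed-point count to the maximum. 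The choice of the maximum as target is exactly what makes every coefficient nonnegative, with no subtraction and no ``balancing'' needed; and because $(S\times S)/\Delta$ has no fixed points at twisted diagonals outside $\H$, the earlier stages are undisturbed.  One works with nonnegative \emph{rational} coefficients throughout and only clears denominators at the very end by scaling by a single positive integer. Your proposal never mentions rational coefficients or a final scaling, and it explicitly defers the nonnegativity bookkeeping to Warraich; this is the one missing idea, and it is the heart of the proof.

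A minor additional point: your claim that left stability at $S$ is ``automatic'' from the initial choice of orbits is correct for the initial biset $X_0$, but one should note (as the fixed-point formulation makes transparent) that the later corrections, being supported on $\Delta(P,\phi)$ with $P < S$, contribute no new $\Delta(S,\beta)$-fixed points and hence cannot disturb stability at $S$.
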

\begin{proof}
The construction of $X$ is the same as that of \cite{BrotoLeviOliver2003},
similarly as in \cite{Park2016} and does not depend on $S$ being a $p$-group.
We give the details for the convenience of the reader.  

Let $\F' = \F \times \F_S(S)$ be the product fusion system. Observe that the
set of subgroups of the form $\Delta(P,\phi)$ with $P \leq S$ and $\phi \in
\Hom_\F(P,S)$ is closed under $\F'$-conjugacy and taking subgroups. For
example, $(\psi,c_s) \in \Hom_{\F'}(\Delta(P,S), S\times S)$ sends
$\Delta(P,S)$ to $\Delta(\psi(P),c_s\phi\psi^{-1})$.

Let
\begin{equation*}
    X_0 = \sum_{[\alpha] \in \Out_\F(S)} S \times_{(S, \alpha)} S,
\end{equation*}
where the sum denotes disjoint union.  This is a $\F$-generated virtual $S$-$S$
biset with nonnegative rational coefficients having the property that
$|X_0^{\Delta(S,\beta)}| = |N_{S \times S}(\Delta(S,\id))/\Delta(S,\id)| =
|Z(S)|$ for all $\beta \in \Aut_\F(S)$.  Thus, the fixed point sizes in $X_0$
are constant on $\F'$-conjugacy classes of twisted diagonal subgroups
$\Delta(S,\beta)$ with $\beta \in \Aut_\F(S)$. 

Let $\H$ be a set of subgroups of $S \times S$ of the form $\Delta(P,\phi)$
with $\phi\colon P \to S$ in $\F$ such that $\H$ is closed under
$\F'$-conjugacy and taking subgroups. Assume given inductively an
$\F$-generated virtual $S$-$S$ biset $X_0$ with nonnegative rational
coefficients such that fixed point sizes on $X_0$ are constant on
$\F'$-conjugacy classes of twisted diagonal subgroups which are not in $\H$.
Let $\P$ be an $\F'$-conjugacy class in $\H$ whose members are maximal under
inclusion among the subgroups in $\H$, and let $\Delta(P,\phi) \in \P$ be a
subgroup for which the fixed point set $X_0^{\Delta(P,\phi)}$ has largest size
(among the elements of $\P$). Define
\[
X_1 = X_0 + \sum_{\Delta} \frac{|X_{0}^{\Delta(P,\phi)}| -
|X_0^{\Delta}|}{|N_{S \times S}(\Delta)/\Delta|} (S \times S)/\Delta
\]
where the sum runs over a set of representatives $\Delta$ for the subgroups in
$\P$ up to $S \times S$-conjugacy.  Thus, $X_1$ is an $\F$-generated virtual
$S$-$S$ biset with nonnegative rational coefficients.  A subgroup $D \notin
\H-\P$ has a fixed point on $(S \times S)/\Delta$ if and only if $D$ is $S
\times S$-conjugate to some $\Delta$, and in this case the number of such fixed
points is $|N_{S \times S}(\Delta)/\Delta|$. So by construction, $|X_1^D| =
|X_0^D|$ for each subgroup $D \leq S \times S$ which is not in $\H$, and
$|X_1^D| = |X_0^{\Delta(P,\phi)}|$ for each $D \in \P$. In particular, fixed
point sizes on $X_1$ are constant on $\F'$-conjugacy classes of twisted
diagonal subgroups which are not in $\H-\P$. 

By induction there is thus an $\F$-generated virtual $S$-$S$ biset
$X_{\mathbb{Q}}$ with nonnegative rational coefficients such that fixed points
sizes are constant on $\F'$-conjugacy classes of twisted diagonal subgroups.
Let $m$ be a positive integer such that $X := mX_{\mathbb{Q}}$ has integer
coefficients. Then $X$ is an $\F$-generated $S$-$S$ biset with the same
property. 

It remains to show that $X$ is left $\F$-stable. Let $\phi \colon Q \to S$ be a
morphism in $\F$, and let $D \leq Q \times S$ be a subgroup. Then either $D$
and $(\phi,\id)(D)$ are not twisted diagonal subgroups in which case they have
no fixed points on $X$, or $|({ }_Q X)^D| = |X^D| = |X^{(\phi,\id)(D)}| = |({
}_\phi X)^{D}|$ by construction of $X$. This shows the fixed point sizes on ${
}_QX$ and on ${ }_\phi X$ are equal for any subgroup of $Q \times S$.  So ${
}_Q X \cong { }_\phi X$ as $Q \times S$-sets. 
\end{proof}

The inductive nature of the argument in the proof of
Theorem~\ref{T:leftsemichar} sometimes makes it difficult to understand
precisely which orbits of the form $S \times_{(Q,\phi)} S$ actually occur in a
semicharacteristic biset. The following lemma gives a sufficient condition on a
pair $(Q,\phi)$ which forces the inclusion of the corresponding orbit. 

\begin{lemma}\label{L:nonextendable}
Let $X$ be a left semicharacteristic biset for $\F$ containing an orbit of the
form $S \times_{(S,\id)} S$. If $\varphi \in \Hom_\mathcal{F}(P,S)$ is
nonextendable, then $X$ contains an orbit isomorphic to $S \times_{(P,\phi)}
S$.
\end{lemma}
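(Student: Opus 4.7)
The strategy is to count $\Delta(P,\varphi)$-fixed points on $X$ in two ways and combine this with nonextendability to identify which orbit types can contribute. First I would use left $\F$-stability to deduce the key equality $|X^{\Delta(P,\varphi)}| = |X^{\Delta(P,\id)}|$. Indeed, an element $x \in X$ is fixed by $\Delta(P,\varphi) \leq S \times S$ (with $X$ viewed as an $S \times S$-set) if and only if it is fixed by $\Delta(P,\id) \leq P \times S$ acting on $\,{}_\varphi X$; and since $\,{}_P X \cong \,{}_\varphi X$ as $P \times S$-sets by $\F$-stability, the corresponding fixed-point counts agree.

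Next I would observe that the common value is positive: the hypothesis guarantees an orbit isomorphic to $(S\times S)/\Delta(S,\id)$ sits inside $X$, and the coset of $\Delta(S,\id)$ is $\Delta(P,\id)$-fixed because $\Delta(P,\id) \leq \Delta(S,\id)$. So $|X^{\Delta(P,\varphi)}| \geq 1$.

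The main step, and the main obstacle, is to show that every $S$-$S$-orbit of $X$ which carries a $\Delta(P,\varphi)$-fixed point is isomorphic to $S \times_{(P,\varphi)} S$. Since $X$ is $\F$-generated, any orbit has the form $(S\times S)/\Delta(Q,\psi)$ with $\psi \in \Hom_\F(Q,S)$, and having such a fixed point amounts to the existence of $(s,t) \in S \times S$ with $(s,t)^{-1}\Delta(P,\varphi)(s,t) \leq \Delta(Q,\psi)$. Unpacking this inclusion shows that $s^{-1}Ps \leq Q$ and that $\psi|_{s^{-1}Ps} = c_{t^{-1}} \circ \varphi \circ c_s =: \tilde{\varphi}$. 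Because the inner automorphisms $c_s$ and $c_{t^{-1}}$ of $S$ lie in $\F$, $\tilde{\varphi}$ is itself a morphism in $\F$; and any extension of $\tilde{\varphi}$ to a larger subgroup would conjugate back via the inverse inner automorphisms to an extension of $\varphi$, contradicting the hypothesis on $\varphi$. So $\tilde{\varphi}$ inherits nonextendability, and $\psi|_{s^{-1}Ps} = \tilde{\varphi}$ forces $Q = s^{-1}Ps$ and $\psi = \tilde{\varphi}$. Thus $\Delta(Q,\psi) = (s,t)^{-1}\Delta(P,\varphi)(s,t)$ is $S\times S$-conjugate to $\Delta(P,\varphi)$, and the corresponding orbit is isomorphic to $S \times_{(P,\varphi)} S$.

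Combining these three steps, $|X^{\Delta(P,\varphi)}| > 0$ forces at least one orbit of $X$ to be of the desired form. The only delicate point is the stability of nonextendability under pre- and post-composition by inner automorphisms of $S$, which is immediate from their invertibility inside $\F$.
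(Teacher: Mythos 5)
Your overall strategy and the nonextendability argument in the third step are sound, and in fact step three is essentially the paper's argument (the paper conjugates the orbit stabilizer to contain $\Delta(P,\varphi)$, whereas you conjugate $\Delta(P,\varphi)$ into the stabilizer, and you spell out why nonextendability is preserved under conjugation; the two are equivalent). However, the first step contains a genuine error.

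You claim that $x \in X$ is fixed by $\Delta(P,\varphi) \leq S \times S$ if and only if $x$ is fixed by $\Delta(P,\id) \leq P \times S$ acting on ${}_\varphi X$, and deduce $|X^{\Delta(P,\varphi)}| = |X^{\Delta(P,\id)}|$. Unwinding the definitions shows this is not the right correspondence. In $X$ as an $(S\times S)$-set, $\Delta(P,\varphi)$-fixed means $u x \varphi(u)^{-1} = x$ for all $u \in P$. In ${}_\varphi X$ with its $(P\times S)$-action $(u,s)\cdot x = \varphi(u) x s^{-1}$, $\Delta(P,\id)$-fixed means $\varphi(u) x u^{-1} = x$ for all $u \in P$, which is the condition of being $\Delta(\varphi(P),\varphi^{-1})$-fixed in $X$, not $\Delta(P,\varphi)$-fixed. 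Consequently, the equality you derive from $\F$-stability is $|X^{\Delta(P,\id)}| = |X^{\Delta(\varphi(P),\varphi^{-1})}|$, and the claimed equality $|X^{\Delta(P,\varphi)}| = |X^{\Delta(P,\id)}|$ does not follow; in general these subgroups need not even be $\F'$-conjugate. As written, step two then fails to yield $|X^{\Delta(P,\varphi)}| > 0$.

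The fix is small and gives essentially the paper's step. Apply left $\F$-stability with the morphism $\varphi$ and the subgroup $D = \Delta(P,\varphi) \leq P \times S$: stability gives $|({}_P X)^{\Delta(P,\varphi)}| = |({}_\varphi X)^{\Delta(P,\varphi)}|$, and $({}_\varphi X)^{\Delta(P,\varphi)}$ is exactly the $\Delta(\varphi(P),\id)$-fixed set in $X$. Hence $|X^{\Delta(P,\varphi)}| = |X^{\Delta(\varphi(P),\id)}|$, which is positive because $\Delta(\varphi(P),\id) \leq \Delta(S,\id)$ and the orbit $S \times_{(S,\id)} S$ contains a $\Delta(S,\id)$-fixed point. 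With this replacement the rest of your argument goes through unchanged.
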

\begin{proof}
Since $\Delta(P,\id) \leq \Delta(S,\id)$ and $\Delta(S,\id)$ fixes a point in
$S \times_{\Delta(S,\id)}S$, $\Delta(P,\id)$ fixes a point in $X$. So
$\Delta(P,\phi)$ fixes a point, say $x \in X$, because $X$ is left $\F$-stable.
Let its orbit be isomorphic to $(S \times S)/\Delta(Q, \gamma)$, and identify
$x$ with the coset $(x_1, x_2)\Delta(Q, \gamma)$. The stabilizer of $x$ is then
$\Delta(Q^{x_1}, c_{x_2}\gamma c_{x_1}^{-1})$, so $\Delta(P,\phi) \leq
\Delta(Q^{x_1}, c_{x_2}\gamma c_{x_1}^{-1})$. This means that $\phi$ extends to
the morphism $c_{x_2}\gamma c_{x_1}^{-1}$ in $\F$ defined on $Q^{x_1}$. Since
$\phi$ is nonextendable, $P = Q^{x_1}$ and $\phi = c_{x_2}\gamma
c_{x_1}^{-1}$. Thus, $X$ contains the $S$-$S$ orbit $S \times_{(Q,\gamma)} S
\cong S\times_{(P,\phi)}S$.
\end{proof}

\begin{lemma}\label{L:QF}
Let $X = \sum_{i = 1}^k S \times_{(Q_i,\phi_i)} S$ be a left semicharacteristic
biset for a fusion system $\F$ on a finite group $S$. Then 
\[
\bigcap_{i=1}^k \bigcap_{s \in S} { }^sQ_i \leq Q(\F).
\]
\end{lemma}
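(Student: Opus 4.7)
The plan is to show that every $Q' \in \Q(\F)$ contains $K := \bigcap_{i=1}^k \bigcap_{s \in S} {}^sQ_i$, which intersected over $\Q(\F)$ gives $K \leq Q(\F)$. Fix a nonextendable morphism $\phi' \colon Q' \to S$ in $\F$ witnessing $Q' \in \Q(\F)$. Since $K$ is normal in $S$ (being an intersection of $S$-conjugates), $KQ'$ is a subgroup of $S$ containing $Q'$; the plan is to extend $\phi'$ to an $\F$-morphism $\tilde\phi'\colon KQ' \to S$, whereupon nonextendability forces $KQ' = Q'$ and yields $K \leq Q'$.

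The first step is to extract structural data from left $\F$-stability. As a left $S$-set, $X/S \cong \bigsqcup_i S/Q_i$, and $K$ is exactly the kernel of the $S$-action on $X/S$: for $k \in K$ and $sQ_i \in S/Q_i$, $k \cdot sQ_i = s(s^{-1}ks)Q_i = sQ_i$ since $s^{-1}ks \in K \leq Q_i$. Descending ${}_KX \cong {}_{\phi_i|_K}X$ to $X/S$ then forces $\phi_i(K) \leq K$, so each $\phi_i|_K$ lies in $\Aut_\F(K)$; the analogous descent of ${}_{Q'}X \cong {}_{\phi'}X$ yields $\phi'(K \cap Q') \leq K$. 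Moreover, for each right $S$-orbit $Y \subseteq X$ with basepoint $\langle a, 1\rangle$ in the $i$-th $S \times S$-orbit of $X$, the left $K$-action on $Y \cong S$ is left multiplication by $\lambda_Y := (\phi_i \circ c_{a^{-1}})|_K \in \Aut_\F(K)$.

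The extension is then built using a bijection $f\colon X \to X$ realizing ${}_{Q'}X \cong {}_{\phi'}X$. For each $Y \in X/S$ set $\rho_Y := \lambda_{f(Y)}^{-1}\lambda_Y \in \Aut_\F(K)$. Tracking the $Q'$-equivariance of $f$ gives $\rho_Y|_{K \cap Q'} = \phi'|_{K \cap Q'}$ for every $Y$ together with the transformation rule $\rho_{qY} = c_{\phi'(q)}\rho_Y c_{q^{-1}}$ for $q \in Q'$. Selecting a $Y_0 \in X/S$ that is fixed by the left $Q'$-action, the rule specializes to the conjugation compatibility $\rho_{Y_0}c_q = c_{\phi'(q)}\rho_{Y_0}$, and then $\tilde\phi'$ defined by $\tilde\phi'|_K := \rho_{Y_0}$ and $\tilde\phi'|_{Q'} := \phi'$ extends $\phi'$ to $KQ'$. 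The main obstacles will be (i) producing a suitable $Q'$-fixed right $S$-orbit $Y_0$, for which I would compare $Q'$-orbit structures on $X/S$ under the inclusion and $\phi'$-actions (equal by $\F$-stability) and invoke the nonextendability of $\phi'$; and (ii) verifying that the glued map $\tilde\phi'$ actually lies in $\F$, which I would approach by realizing it as a restriction of an appropriate composite of some $\phi_i$ and $S$-conjugations visible from $Y_0$ and the bijection $f$.
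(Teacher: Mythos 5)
Your structural observations are correct as far as they go — $K := \bigcap_{i,s} {}^sQ_i$ is indeed normal in $S$ and is exactly the kernel of the left $S$-action on $X/S \cong \bigsqcup_i S/Q_i$, and descending the stability isomorphism ${}_KX \cong {}_{\phi_i|_K}X$ to $X/S$ does force $\phi_i(K) \le K$. But the two steps you flag as ``main obstacles'' are not minor verifications; they are where the whole argument lives, and neither is resolved. For (i), finding a right $S$-orbit $Y_0$ fixed by the left $Q'$-action amounts to showing $Q' \le {}^sQ_i$ for some $i$ and some $s$. Your suggestion to ``compare $Q'$-orbit structures under the inclusion and $\phi'$-actions and invoke nonextendability'' does not obviously produce such an orbit: $\F$-stability gives a bijection between inclusion-fixed orbits and $\phi'$-fixed orbits, but neither set is shown to be nonempty, and nonextendability of $\phi'$ does not by itself bear on the left action of $Q'$ on $X/S$. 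The natural way to produce $Y_0$ is to observe that $\Delta(Q',\id) \le \Delta(S,\id)$ fixes a point once $X$ contains the orbit $S\times_{(S,\id)}S$ — but this is precisely the hypothesis and mechanism of Lemma~\ref{L:nonextendable}, which you are implicitly re-deriving. For (ii), even granted $Y_0$, showing that the glued map $\tilde\phi'$ on $KQ'$ lies in $\F$ (and not merely that its restrictions to $K$ and $Q'$ do) needs a concrete witness: one must unwind the $Q'$-equivariance of $f$ on $Y_0$ to write $\phi'$ explicitly as a restriction of a composite $c_b\circ\phi_j^{-1}\circ c_c\circ\phi_i\circ c_{a^{-1}}$ whose domain visibly contains $K$. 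In an arbitrary fusion system there is no abstract ``gluing'' principle, so this step cannot be waved through.

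The paper's own proof bypasses all of this. It applies Lemma~\ref{L:nonextendable} directly: for every $Q' \in \Q(\F)$ with nonextendable $\phi'\colon Q' \to S$, the biset $X$ must contain an orbit isomorphic to $S\times_{(Q',\phi')}S$, so $Q'$ is $S$-conjugate to some $Q_i$, whence $\Q(\F)$ is contained (up to $S$-conjugacy) in $\{Q_1,\dots,Q_k\}$ and the inclusion of intersections follows immediately. Once you have a $Q'$-fixed $Y_0$ and unwind $f$ on it, you will find that this is exactly what you have reconstructed: the domain of the composite above is a conjugate of some $Q_j$ containing $Q'$, nonextendability forces equality, and then $K \le Q'$ is automatic — making the construction of $\tilde\phi'$ itself superfluous. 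So the plan is not wrong in spirit, but it is a substantially longer road that, at the one genuinely delicate point, collapses back onto the argument of Lemma~\ref{L:nonextendable}; as written, with (i) and (ii) unproved, it is not yet a proof.
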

\begin{proof}
Let $\Q(X) = \{ { }^sQ_i \mid 1 \leq i \leq k, s \in S\}$ and $Q(X) = \bigcap
\Q(X)$ for short.  Thus, we must show $Q(X) \leq Q(\F)$.  By
Lemma~\ref{L:nonextendable}, for each nonextendable morphism $\phi \colon Q \to
S$ in $\F$, there is some point of $X$ with stabilizer $\Delta(Q,\phi)$ in $S
\times S$. So for each $s \in S$, there is some point in $X$ with stabilizer
$\Delta({ }^sQ, c_s \phi c_{s}^{-1})$ and $c_s \phi c_{s}^{-1}$ is
nonextendable by Remark~\ref{R:QF}.  This shows $\Q(\F) \subseteq \Q(X)$, so
$Q(X) \leq Q(\F)$.
\end{proof}

The reverse inclusion in Lemma~\ref{L:QF} need not hold. If $X$ is a left
semicharacteristic biset for $\F$, then the disjoint union of $X$ with a number
of free $S \times S$-orbits $(S \times S)/\Delta(1,\iota_1^S)$ is again left
semicharacteristic. So there is always a semicharacteristic biset with some
$Q_i = 1$. 

\subsection{The Park embedding} 

Let $\F$ be a fusion system on the finite group $S$, and let $X$ be a left
semicharacteristic biset for $\F$ which contains an orbit of the form $S
\times_{(S,\id)} S$. Consider the group $G = \Aut( { }_1 X)$ of automorphisms
of $X$ as a right $S$-set.  We explain briefly Park's embedding of $S$ into $G$
with respect to which conjugation in $G$ on the subgroups of $S$ realizes the
fusion system $\F$. 

Fix a decomposition
\begin{equation*}
    X = \sum_{i=1}^k S \times_{(Q_i, \varphi_i)} S
\end{equation*}
such that $Q_i \le S$ and $\varphi_i \in \Hom_\F(Q_i, S)$ for all $1 \le i \le
k$ and such that $Q_1 = S$ and $\varphi_1 = \id_S$.  Following \cite{Park2010},
define $\iota$ as: 
\begin{equation*}
    S \xrightarrow{\iota} \textrm{Aut}(_1X) = G
\end{equation*}
\begin{equation*}
    u \mapsto (x \mapsto ux).
\end{equation*}
This is indeed an injection because each orbit $S \times_{(Q_i,\phi)} S$ is
free as a left $S$-set.  The same argument from \cite[Theorem~3]{Park2010}
copied verbatim shows that $\iota$ induces an isomorphism of fusion systems $\F
\cong \F_{\iota(S)}(G)$. This gives Theorem~\ref{T:park}.

\begin{theorem}[{\cite[Chapter~4]{Warraich2019}, c.f. \cite{Park2010}}]
\label{T:parkdetail}
Let $\F$ be a fusion system on the finite group $S$, and let $X$ be any left
semicharacteristic biset for $\F$ which contains the orbit $S \times_{(S, \id)}
S$. Let $G = \Aut({}_1X)$, the group of automorphisms of $X$ as a right
$S$-set. Then $G \cong S \wr \Sigma_n$ for some natural number $n$, and there
is an injection $\iota: S \rightarrow G$ such that $\mathcal{F} \cong
\mathcal{F}_{\iota(S)}(G)$.
\end{theorem}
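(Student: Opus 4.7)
The plan is to prove the two assertions in turn: the structural description $G \cong S \wr \Sigma_n$, and then the existence of the injection $\iota$ with $\F \cong \F_{\iota(S)}(G)$.

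For the structure of $G$, I would first analyze $X$ as a right $S$-set. Fix a decomposition $X = \sum_{i=1}^k S \times_{(Q_i,\phi_i)} S$ with $(Q_1,\phi_1) = (S,\id_S)$. On each biset orbit, the right $S$-action is free: if $\gen{x,yt} = \gen{x,y}$ then there exists $u \in Q_i$ with $xu = x$ and $\phi_i(u)^{-1}yt = y$, forcing $u = 1$ and then $t = 1$. Hence each such orbit decomposes into $|S|/|Q_i|$ free transitive right $S$-orbits, each abstractly isomorphic to $S$. Setting $n = \sum_{i=1}^k |S|/|Q_i|$, we obtain an isomorphism of right $S$-sets $X \cong \bigsqcup_{j=1}^n S$. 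Since the automorphism group of $S$ as a right $S$-set is canonically $S$ acting by left multiplication, an automorphism of $X$ consists of a permutation of the $n$ right $S$-orbits together with a left-translation on each, giving $G = \Aut({}_1X) \cong S \wr \Sigma_n$.

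Next, define $\iota \colon S \to G$ by $\iota(u)\colon x \mapsto ux$ as in the paragraph preceding the theorem; this commutes with the right $S$-action so lies in $G$. Injectivity follows from the presence of the orbit $S \times_{(S,\id)} S$, on which the left $S$-action is already free, so the action of $S$ on $X$ is faithful. For the fusion isomorphism $\F \cong \F_{\iota(S)}(G)$, the containment $\F \subseteq \F_{\iota(S)}(G)$ is the content of left $\F$-stability: given $\phi \in \Hom_\F(Q,S)$, a $Q$-$S$-biset isomorphism $g \colon {}_QX \to {}_\phi X$ is an element of $G$ (it is right $S$-equivariant), and its compatibility with the twisted left $Q$-action unpacks to the identity $g\,\iota(u)\,g^{-1} = \iota(\phi(u))$ for every $u \in Q$. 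The reverse containment $\F_{\iota(S)}(G) \subseteq \F$ is the argument noted in the excerpt as copied verbatim from \cite[Theorem~3]{Park2010}: given $g \in G$ with ${}^g\iota(Q) \leq \iota(S)$, one tracks the image of the distinguished point in $S \times_{(S,\id)} S$ under $g$, identifies the stabilizer as a twisted diagonal $\Delta(Q_i^{x_1}, c_{x_2}\phi_i c_{x_1}^{-1})$ arising from an orbit of $X$, and reads off the induced conjugation on $\iota(Q)$ as a composition of inner automorphisms of $S$ with a restriction of $\phi_i$, all of which lie in $\F$ by the $\F$-generatedness of $X$.

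The main obstacle is the reverse inclusion of the fusion isomorphism: translating the combinatorics of how $g \in G$ permutes and acts within the right $S$-orbits of $X$ into a single morphism of $\F$ on $Q$. The key conceptual point is that the existence of the free orbit $S \times_{(S,\id)} S$ provides a basepoint which can be used to identify the stabilizer of any image point, and hence the conjugation action of $g$, as a specific $(Q_i,\phi_i)$-datum dictated by the decomposition of $X$; the other two pieces (structure of $G$, injectivity and one inclusion of the fusion isomorphism) are straightforward bookkeeping once the right $S$-orbit structure of $X$ has been sorted out.
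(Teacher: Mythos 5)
Your proposal is correct and follows essentially the same approach as the paper (and Park's original argument): decompose $X$ into $n = \sum_i |S:Q_i|$ free right $S$-orbits to identify $G \cong S \wr \Sigma_n$, define $\iota$ by left multiplication, use the distinguished orbit $S \times_{(S,\id)} S$ for injectivity and as basepoint, and obtain the two inclusions --- the forward one from left $\F$-stability, the reverse by tracking stabilizers as in \cite[Theorem~3]{Park2010}. The paper is terser on the fusion-isomorphism step, deferring it to Park verbatim, whereas you helpfully spell out the forward inclusion; the two treatments agree.
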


We next set up notation that will be needed later, looking more closely at the
structure of $G$ and the embedding $\iota$.  For each $i$, fix a collection
$\{t_{ij}\}_{j \in J_i}$ of representatives of the left cosets of $Q_i$, and
set $n_i = |S : Q_i| = |J_i|$. The action of $u \in S$ on the coset
representatives is given by $ut_{ij}Q_i = t_{i\sigma_i(u)(j)}Q_i$, where
$\sigma_i(u)\colon J_i \to J_i$ is the induced permutation on $J_i$.  As a
right $S$-set, the biset $S \times_{(Q_i, \varphi_i)} S$ decomposes as
\begin{equation*} S \times_{(Q_i, \varphi_i)} S = \sum_{j \in J_i}\left<t_{ij},
S\right>, \end{equation*}
where $\left<t_{ij}, S\right> := \{\left<t_{ij}, y\right> \; | \; y \in S\}$ is
the set of ordered pairs with free and transitive right $S$-action given by
$\gen{t_{ij},y}\cdot s$ = $\gen{t_{ij},ys}$. Hence, also
\begin{equation*}
    X = \sum_{i=1}^k \sum_{j \in J_i}\left<t_{ij}, S\right>
\end{equation*}
as a right $S$-set.  

Since the right action of $S$ on $\gen{t_{ij},S}$ is regular, each automorphism
of $\gen{t_{ij},S}$ as a right $S$-set is left multiplication by an element of
$S$, i.e., of the form $\gen{t_{ij},y} \mapsto \gen{t_{ij}, sy}$. Thus, $\Aut({
}_1\gen{t_{ij}, S}) \cong S$. It therefore follows from the above
decompositions that
\begin{equation*}
    G_i := \text{Aut}(_1(S \times_{(Q_i, \varphi_i)} S)) \cong S \wr \Sigma_{n_i},
\end{equation*}
and 
\begin{equation*}
    G = \Aut({ }_1 X) \cong S \wr \Sigma_{n}.
\end{equation*}
where $n = \sum_{i =1}^k n_i$.

We examine more closely the map $\iota$. Now $S$ acts from the left on each $S
\times_{(Q_i, \varphi_i)} S$, so $\iota(S) \le \prod G_i \le G$. Let $u \in S$.
Since $ut_{ij} \in t_{i\sigma_i(u)(j)}Q_i$, we have
$(t_{i\sigma_i(u)(j)})^{-1}ut_{ij} \in Q_i$, and
\begin{equation*}
u\left<t_{ij}, y\right> 
= \left<ut_{ij}, y\right> 
= \left<t_{i\sigma_i(u)(j)}\cdot (t_{i\sigma_i(u)(j)})^{-1}ut_{ij},\,\, y\right> 
= \left<t_{i\sigma_i(u)(j)}, \,\,\varphi_i((t_{i\sigma_i(u)(j)})^{-1}ut_{ij})y\right>.
\end{equation*}
Thus, writing $\pi_i$ for the projection $\Pi G_i \to G_i$, we have
\begin{equation*}
\pi_i(\iota(u)) = (\, (\,\,
\varphi_i((t_{i\sigma_i(u)(j)})^{-1}ut_{ij})\,\,)_{j \in J_i};
\,\,\sigma_i(u)\,) \in S \wr \Sigma_{n_i}. 
\end{equation*} 
The following lemma gives some information on the intersection of $\iota(S)$
with the base subgroup of $G$. 

\begin{lemma}\label{L:capQi}
Let $\F$ be a fusion system on the finite group $S$ with left
semicharacteristic biset $X$ containing $S \times_{(S,\id)} S$ and Park
embedding $\iota \colon S \to G = \Aut({ }_1X) \cong S\wr \Sigma_n$.  Let $B =
S^n$ be the base subgroup of $G$. Then 
\begin{equation*}
B \cap \iota(S) \leq \iota(Q(\F)). 
\end{equation*}
\end{lemma}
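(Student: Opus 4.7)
The plan is to compute $\iota^{-1}(B \cap \iota(S))$ directly from the explicit formula for $\pi_i(\iota(u))$ displayed just above the statement of the lemma and then invoke Lemma~\ref{L:QF}. Since $G = \prod_{i=1}^k G_i$ with $G_i \cong S \wr \Sigma_{n_i}$, the base subgroup $B$ is the product of the base subgroups of the $G_i$. From the formula for $\pi_i(\iota(u))$, the permutation part of $\iota(u)$ in the $i$-th factor is $\sigma_i(u)$, so $\iota(u) \in B$ if and only if $\sigma_i(u) = \id_{J_i}$ for every $i \in \{1,\ldots,k\}$.

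Next I would translate $\sigma_i(u) = \id$ into a group-theoretic condition on $u$. By construction $\sigma_i(u)$ is the permutation of $J_i$ induced by left multiplication by $u$ on the left cosets of $Q_i$ via the transversal $\{t_{ij}\}_{j \in J_i}$, so $\sigma_i(u) = \id$ is equivalent to $u t_{ij} Q_i = t_{ij} Q_i$ for every $j$, i.e., $u \in t_{ij} Q_i t_{ij}^{-1} = {}^{t_{ij}} Q_i$ for every $j \in J_i$. Thus $\iota(u) \in B$ is equivalent to
\[
u \in \bigcap_{i=1}^k \bigcap_{j \in J_i} {}^{t_{ij}}Q_i.
\]

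Then I would observe that $\bigcap_{j \in J_i} {}^{t_{ij}}Q_i = \bigcap_{s \in S} {}^s Q_i$: the containment $\supseteq$ is immediate, and for $\subseteq$ write an arbitrary $s \in S$ as $s = t_{ij} q$ with $q \in Q_i$, so that ${}^s Q_i = {}^{t_{ij}}({}^q Q_i) = {}^{t_{ij}} Q_i$. Combining these two reductions, $\iota(u) \in B$ is equivalent to $u \in \bigcap_{i=1}^k \bigcap_{s \in S} {}^s Q_i$.

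Finally, applying Lemma~\ref{L:QF} to the decomposition $X = \sum_i S \times_{(Q_i,\phi_i)} S$ gives $\bigcap_{i=1}^k \bigcap_{s \in S} {}^s Q_i \leq Q(\F)$, which together with the previous step yields $B \cap \iota(S) \leq \iota(Q(\F))$. There is no real obstacle; the argument is essentially bookkeeping on the formula for $\pi_i(\iota(u))$, combined with the coset-independence of conjugation and one invocation of Lemma~\ref{L:QF}.
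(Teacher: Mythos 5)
Your proof is correct and follows essentially the same route as the paper's: read off from the formula for $\pi_i(\iota(u))$ that $\iota(u) \in B$ iff each $\sigma_i(u)$ is trivial, translate that to $u \in \bigcap_i \bigcap_{s \in S} {}^s Q_i$, and then apply Lemma~\ref{L:QF}. Your write-up just makes explicit the coset-independence step that the paper leaves implicit.
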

\begin{proof}[Proof]
Write $X = \sum_{i = 1}^k S \times_{(Q_i,\phi_i)} S$.  For each $u \in S$, the
image $\iota(u) \in B$ if and only if  $\sigma_i(u) = 1$ for all $1 \leq i \leq
k$ in the notation above. That is, $\iota(u) \in B$ if and only if $u$ fixes
all cosets $tQ_i$, that is, if and only if $u \in \bigcap_{i} \bigcap_{t \in S}
Q_i^t$.  The result now follows from Lemma~\ref{L:QF}. 
\end{proof}

\section{Proof of Theorems~\ref{T:realfus} and \ref{T:main}}\label{S:realaut}
We now state and prove a slightly more detailed version of
Theorem~\ref{T:realfus}.

\begin{theorem}
\label{T:realfus-detailed}
Let $A$ be a finite group. Then there are a finite group $S$, a fusion system
on $S$, and a homocyclic abelian subgroup $U$ of $S$ such that $Q(\F) = 1$,
$\foc(\F) = S$, and $\Aut_\F(U) = A$. Moreover, $S$, $U$, and $\F$ can be
chosen so as to satisfy the following additional properties.
\begin{enumerate}
\item[\textup{(i)}] $S$ is the semidirect product of $U$ by $A$ with respect to
a faithful action of $A$ on $U$,
\item[\textup{(ii)}] the exponent of $U$ is the exponent of $A$, and
\item[\textup{(iii)}] if $A > 1$, then there is $Q \in \Q(\F)$ such that $|S:Q| > 2|A|$. 
\end{enumerate}
\end{theorem}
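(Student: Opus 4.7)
My plan is to construct $S$ and $U$ concretely and then cook up $\F$ with just enough extra fusion. Set $e := \exp(A)$, let $V := (\ZZ/e)[A]$ be the regular permutation module on the basis $\{e_a\}_{a \in A}$, and take $U$ to be $V$ (or, to ensure property (iii) uniformly, $V$ together with an extra trivial $\ZZ/e$-summand). Set $S := U \rtimes A$. Properties (i) and (ii) are built in by construction, and $\Aut_S(U) = S/C_S(U) = S/U = A$ already.

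For the fusion system, I would fix a generating set $\{g_1, \dots, g_r\}$ of $A$ with $|g_1| = e$ and, for each $i$, an element $v_i \in U$ of order $|g_i|$, and add the morphism $\phi_i \colon \gen{g_i} \to \gen{v_i}$ sending $g_i \mapsto v_i$. Setting $\F = \gen{\Inn(S), \phi_1, \dots, \phi_r}_S$ (possibly enlarging the list by one or two analogous morphisms if $r$ is too small for the focal-subgroup condition to close up, as happens when $A$ is cyclic), the focal subgroup contains $[S, S]$ (from inner automorphisms) and each $v_i g_i^{-1}$. By choosing augmentations $\mathrm{aug}(v_i) = e/|g_i|$ and using $|g_1| = e$, the classes of these elements generate $S/[S,S] \cong (\ZZ/e) \times A^{\mathrm{ab}}$, which gives $\foc(\F) = S$.

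For $\Aut_\F(U) = A$, the crucial fact is $\gen{g_i} \cap U = 1$: in any composition of restrictions of morphisms from $\Inn(S) \cup \{\phi_i^{\pm 1}\}$ producing a morphism $U \to U$, any step using $\phi_i^{\pm 1}$ would force the intermediate subgroup either into $\gen{g_i} \cap U = 1$ or into $\gen{v_i} \leq U$, cutting off any path back to a full-size automorphism of $U$; hence every $\F$-automorphism of $U$ is an inner automorphism of $S$ restricted to $U$, yielding $\Aut_\F(U) = \Aut_S(U) = A$. For $Q(\F) = 1$: once each $\phi_i$ is shown to be nonextendable in $\F$, the inverse $\phi_i^{-1}$ is nonextendable as well (with source $\gen{v_i} \leq U$), so both $\gen{g_1}$ and $\gen{v_1}$ lie in $\Q(\F)$, and $Q(\F) \leq \gen{g_1} \cap \gen{v_1} \leq A \cap U = 1$. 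For (iii), $Q := \gen{g_1} \in \Q(\F)$ has $|S:Q| = |U||A|/e$, which exceeds $2|A|$ iff $|U| > 2e$ --- this is exactly what the possible enlargement of $U$ arranges, while for $|A| \geq 3$ or $e \geq 3$ the regular module $V$ alone already suffices.

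The main obstacle is the verification that each $\phi_i$ is nonextendable in $\gen{\Inn(S), \phi_1, \dots, \phi_r}_S$ and the companion verification of $\Aut_\F(U) = A$. Both reduce to analyzing possible chains $P_0 \to P_1 \to \cdots \to P_n$ given by compositions of restrictions of $\Inn(S) \cup \{\phi_i^{\pm 1}\}$; the constraints $\gen{g_i} \cap U = 1$ and $\gen{v_i} \leq U$, together with the normality of $U$ in $S$ (so that inner automorphisms preserve $U$), imply that any chain including a $\phi_i$-step must pass through an unexpectedly small subgroup. This makes it impossible both to extend $\phi_i$ beyond $\gen{g_i}$ and to traverse from $U$ back to $U$ non-innerly, but turning this into a clean argument is the delicate step of the whole construction.
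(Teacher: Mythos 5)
Your construction differs genuinely from the paper's. The paper takes $U = C_e^{|A|}\times C_e^{|A|}$ with $A$ acting freely on each factor and generates $\F$ by the \emph{full} automorphism groups of all rank-$2$ homocyclic subgroups of order $e^2$; you instead take the regular module and generate $\F$ by a small explicit family of isomorphisms $\phi_i\colon\gen{g_i}\to\gen{v_i}$ from cyclic subgroups of the complement into cyclic subgroups of $U$. The ``too small to contain $U$'' mechanism for $\Aut_\F(U)=\Aut_S(U)$ is the same in spirit. But there are two real gaps in your argument as stated.

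First, the hypothesis ``$|g_1|=e=\exp(A)$'' is simply false for many groups: $\Sigma_3$ has exponent $6$ but no element of order $6$. Your focal-subgroup computation leans on this in an essential way, since you use $\mathrm{aug}(v_1)=e/|g_1|=1$ to produce a generator of the $\ZZ/e$ factor of $S^{\mathrm{ab}}$. For any $v$ of order $m$ in $U=(\ZZ/e)[A]$, the augmentation of $v$ is forced to be a multiple of $e/m$, so with no element of order $e$ you cannot get $\mathrm{aug}(v_1)=1$ from a single $\phi_1$. The constraint also propagates into your nonextendability argument: if $|g_1|$ is not maximal among the $|g_j|$, one cannot rule out an extension of $\phi_1$ whose source is a larger cyclic group $S$-conjugate into some other $\gen{g_j}$. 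There is a fix (take $|g_1|$ to be the \emph{maximal element order} of $A$, choose all other $g_j$ of order at most $|g_1|$, and separately throw in morphisms with $\mathrm{aug}(v)=e/m$ for each prime power $m$ exactly dividing $e$, noting $\gcd\{e/m\}=1$), but that is a different argument than the one you wrote. Second, and more importantly, the nonextendability of $\phi_1$ --- which is what puts $\gen{g_1}$ and $\gen{v_1}$ into $\Q(\F)$ and makes $Q(\F)=1$ --- is asserted and then explicitly deferred (``turning this into a clean argument is the delicate step''). It is in fact not very delicate once one observes: any $\F$-morphism not a composition of inner automorphisms has its source $S$-conjugate into some $\gen{g_j}$ or $\gen{v_j}$ (look at the first non-inner step of a minimal-length word), while a purely inner morphism cannot send $g_1\notin U$ to $v_1\in U$ because $U\triangleleft S$; one then needs $|g_1|$ maximal so no strictly larger cyclic subgroup fits inside any $\gen{g_j}$. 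But since you flag this step as the crux and do not close it, and the $|g_1|=e$ premise is wrong, the proposal as written does not prove the theorem.
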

\begin{proof}
In case $A = 1$, we take $G = S = U = 1$ and $\F = \F_S(G)$.  
So we may assume $A \neq 1$.  Let $e$ be the exponent of $A$.  Consider the
homocyclic group $U = C_e^{|A|} \times C_e^{|A|}$ with free action of $A$ on
each $C_e^{|A|}$ factor, and let $S := UA$ be the semidirect product with
respect to this action. Thus, $\Aut_{S}(U) \cong A$ and (i) and (ii) are
satisfied.  Let $\mathcal{V}$ be the collection of all rank 2 homocyclic
subgroups of $S$ of order $e^2$, and define
\[
\F = \gen{\Aut(V) \mid V \in \V}_S.
\]

We first prove that $\Aut_\F(U) = \Aut_{S}(U) \cong A$. By definition of a
fusion system, $\Aut_S(U) \subseteq \Aut_\F(U)$. Let $\psi \in
\text{Aut}_\mathcal{F}(U)$. By construction of $\F$ we may choose a natural
number $n$ and automorphisms $\psi_1,\dots,\psi_n$ of subgroups $T_i \leq S$
such that $\psi = \psi_n|_{U_n} \circ \ldots \circ \psi_{1}|_{U_1}$ for certain
subgroups $U_i$ which are isomorphic to $U$, and such that either $T_i = S$ or
$T_i \in \V$ for each $i = 1,\dots, n$. Since $A$ is not the trivial group, $U
\cong U_i$ has rank at least $4$, and so is not isomorphic to a subgroup of any
$V \in \V$. Thus, we must have $T_i = S$ for each $i$, and hence $\psi \in
\Aut_S(U)$.

Consider the collection of triples $(V, R, \alpha)$ such that $V < R \leq S$,
$V \in \V$, and $\alpha \in \Hom_\F(R, S)$, and such that there is an element
$c \in V$ with $\alpha(c)$ not $S$-conjugate to $c$. We claim this is the empty
collection. Assume false, and among all such triples, choose one such that
$\alpha$ has a decomposition with a minimal number $n$ of morphisms, and then
choose such a minimal decomposition $\alpha = \alpha_n|_{R_{n-1}} \circ \cdots
\circ \alpha_1|_{R_{0}}$ with $R_0 = R$ and $\alpha_i \in \Aut(T_i)$ ($T_i \in
\{S\} \cup \V$).  By definition of $\V$ and assumption on the structure of $V$,
$V$ is maximal under inclusion in $\V$. So $T_0 = S$ and $\alpha_1 \in
\Inn(S)$. Replace the triple $(V,R,\alpha)$ by $(\alpha_1(V), R_1,
\alpha\alpha_1|_{R}^{-1})$.  As $\alpha_1 \in \Inn(S)$, we have that
$\alpha_1(c)$ is not $S$-conjugate to $\alpha\alpha_1^{-1}(\alpha_1(c)) =
\alpha(c)$.  Moreover, $\alpha\alpha_1|_V^{-1}$ extends to $\alpha_n|_{R_{n-1}}
\circ \cdots \circ \alpha_2|_{R_{1}}$ on $R_1 = \alpha_1(R) > \alpha_1(V)$.
Thus, $(\alpha_1(V), R_1, \alpha\alpha_1|_V^{-1})$ is another counterexample in
which the morphism has a shorter decomposition.  This contradicts the choice of
the triple $(V, R, \alpha)$. In particular, this shows that if $V \in \V$ and a
morphism $\alpha \in \Hom_\F(V,S)$ has the property that $c$ and $\alpha(c)$
are not $S$-conjugate, then $\alpha$ is nonextendable.

Let 
\[
\V_1 = \{V \in \V \mid V \text{ supports an nonextendable automorphism}\}.
\]
We next claim that $\gen{V \mid V \in \V_1} = S$ and $\bigcap_{V \in \V_1} V =
1$.  Since the focal subgroup of $\F$ contains $[V,\Aut(V)] = V$ for each $V
\in \V$, this will also show $\foc(\F) = S$ and thus complete the proof.  

By the structure of $U$ as an $A$-module, $C_U(A) = Z_1Z_2$ with $\gen{z_1} =
Z_1 \cong C_e \cong Z_2 = \gen{z_2}$ and $Z_1 \cap Z_2 = 1$.  Since $|A| \geq
2$, $U$ has rank at least $4$. So there is a choice of a pair of cyclic
subgroups $W_1, W_2 \leq U$ of order $e$ such that $W_i \cap C_U(A) = 1$ and
$W_1Z_1 \cap W_2Z_2 = 1$. For any such choice, there is an automorphism
$\alpha_i$ of $W_iZ_i$ which interchanges $W_i$ and $Z_i$ and thus does not
extend to an $S$-automorphism of $W_iZ_i$ (because $Z_i \leq Z(S)$). So by the
above, we see that $W_iZ_i \in \V_1$ for $i = 1, 2$. In particular this shows
that $U \leq \gen{\V_1}$ and $\bigcap \V_1 = 1$. The method of proof also shows
(iii) is satisfied, since $Z_1Z_2 \in \Q(\F)$ and $|S:Z_1Z_2| \geq |U:Z_1Z_2|
\geq e^2|A| > 2|A|$. 

Let $p$ be a prime dividing $|A|$, let $p^a$ be the $p$-part of the exponent of
$A$, and let $C$ be any cyclic subgroup of $A$ with generator $c$ of order
$p^b$. We claim that there is $V \in \V_1$ with $UC/U \leq UV/U$.  Let $u \in
U-[C,U]$ be any element of order $p^{a-b}$. Then $uc$ has order $p^{a}$.  Fix
an element $w \in C_U(C)$ of order $e/p^a$ and set $W = \gen{wuc}$. Since
$e/p^a$ is prime to $p$, $W$ is cyclic of order $e$. Since the rank of $C_U(A)$
is $2$, we can again find a cyclic subgroup $Z \leq C_U(A)$ of order $e$ with
$W \cap Z = 1$, and then $V = WZ$ is homocyclic of order $e^2$. As before,
there is an automorphism of $V$ interchanging $W$ and $Z$, which therefore does
not extend to an $S$-automorphism of $V$. This shows that $V \in \V_1$. By
construction $UC/U \leq UV/U$, and we saw above that $U \leq \gen{V_1}$.
Since $C$ was an arbitrary cyclic subgroup of $p$-power order, and the set of
such subgroups generates $A$ as $p$ ranges over the primes dividing $A$, it
follows that $\gen{\V_1} = S$.
\end{proof}

Before giving the proof of Theorem~\ref{T:main}, we prove a specialized lemma
about the commutator subgroup of a wreath product. 
\begin{lemma}
\label{L:wrcomm}
Let $S$ be a group, let $K$ be a subgroup of $\Sigma_n$ with $n > 1$, and let
$\Gamma = S \wr K$ with base subgroup $B$ and $G = \Gamma' = [\Gamma,\Gamma]$.
Assume that $K'$ is perfect and transitive. Then $[B,B] \leq [K,B] = [K',B] =
[K',K',B]$ and $G = [K',B]K'$ is perfect.  
\end{lemma}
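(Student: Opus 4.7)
My plan is to use the transitivity assumptions on $K$ and $K'$ to pin down the quotients $B/[K,B]$ and $B/[K',B]$ explicitly, then reduce the identity $[K',B] = [K',[K',B]]$ to the question $[B,B] \leq [K',[K',B]]$ via the abelian quotient $\Gamma/[B,B]$, and finally assemble these into the structural statements via the standard commutator decomposition of a semidirect product.

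First I would establish $B/[K,B] \cong S^{\mathrm{ab}}$. Writing $b_i(s) \in B = S^n$ for the element with $s$ in coordinate $i$ and $1$ elsewhere, transitivity of $K$ together with $\sigma b_i(s) \sigma^{-1} = b_{\sigma(i)}(s)$ forces the images of all $b_i(s)$ to coincide in $B/[K,B]$; call this common image $\bar{s}$. In $B$ the relations $b_1(s)b_1(t) = b_1(st)$ and $b_1(s)b_2(t) = b_2(t)b_1(s)$ translate into $\bar{s}\bar{t} = \overline{st}$ and $\bar{s}\bar{t} = \bar{t}\bar{s}$ in the quotient, identifying $B/[K,B] \cong S^{\mathrm{ab}}$. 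In particular $[B,B] \leq [K,B]$. The same argument applied to $K'$ (also transitive) gives $B/[K',B] \cong S^{\mathrm{ab}}$; combined with the trivial $[K',B] \leq [K,B]$ and equal indices, this forces $[K',B] = [K,B]$.

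For $[K',B] = [K',[K',B]]$ (write $N := [K',[K',B]]$), the inclusion $N \leq [K',B]$ is immediate from $K'$-invariance of $[K',B]$. For the reverse, working modulo $[B,B]$ renders $\bar{B} = B/[B,B]$ abelian, and in this abelian setting perfectness of $K'$ forces $[K',\bar{B}] = [K',[K',\bar{B}]]$ (equivalent to the vanishing of $H_0(K',\bar{I}) = 0$ for $\bar{I}$ the augmentation submodule of $\bar{B} = \bar{S}^n$, obtained from Shapiro's lemma applied to the stabilizer of a coordinate together with $(K')^{\mathrm{ab}} = 0$). Lifting yields $[K',B] = N \cdot [B,B]$, so it remains to show $[B,B] \leq N$. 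This I would verify directly: each generator $b_i([s,t])$ of $[B,B] = [S,S]^n$ can be exhibited as an element of $N$ by combining commutators of the form $[k',[k, b_j(u)]]$ with $k, k' \in K'$ and $b_j(u) \in B$, exploiting the transitivity of $K'$ to arrange the coordinate profiles. Once $[B,B] \leq N$ is established, the expansion ${}^b[k,x] = [b,[k,x]] \cdot [k,x]$ together with the fact that $[[K',B], [K',B]] \leq [B,B] \leq N$ shows that $N$ is $B$-invariant, hence $N = [K',B]$.

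Finally, the general semidirect product identity $\Gamma' = [K,B][B,B]K'$ (immediate from $\Gamma/([K,B][B,B]) \cong B^{\mathrm{ab}} \times K$ with trivial $K$-action) together with the above yields $G = [K',B]K'$. For perfectness: $K' = [K',K'] \leq [G,G]$ trivially, and since $[K',B] = [K',[K',B]]$ is generated by commutators $[k,x]$ with $k \in K' \subseteq G$ and $x \in [K',B] \subseteq G$, we have $[K',B] \leq [G,G]$; hence $G \leq [G,G]$, i.e., $G$ is perfect. The main obstacle is the explicit verification of $[B,B] \leq N$ in the middle paragraph, where the perfectness hypothesis on $K'$ must be combined with careful coordinatewise manipulation in the wreath product.
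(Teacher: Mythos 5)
The overall architecture of your argument (identify $B/[J,B]\cong S^{\mathrm{ab}}$ for any transitive $J\leq\Sigma_n$, deduce $[B,B]\leq[K,B]=[K',B]$, reduce $G=\Gamma'$ to $K'[K',B]$ via the semidirect product decomposition, and finish perfectness by writing each factor as a commutator subgroup of $G$) matches the paper's. However, there is a genuine gap in the middle step proving $[K',B]=[K',K',B]$, and you flag it yourself.

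Your plan is to pass to $\bar B=B/[B,B]$, use a group-homology argument (Shapiro plus $(K')^{\mathrm{ab}}=0$) to get $[K',\bar B]=[K',[K',\bar B]]$, lift to $[K',B]=N\cdot[B,B]$ with $N=[K',[K',B]]$, and then verify $[B,B]\leq N$ by a direct coordinatewise computation. That last verification is not carried out, and it is not an easy or routine one. The natural identity here, $e_i([s,t])=[c^{i-1}_i(s),\,c^i_{i+1}(t^{-1})]$ with $c^a_b(s)=e_b(s)e_a(s)^{-1}\in[K',B]$, exhibits $e_i([s,t])$ as an element of $[[K',B],[K',B]]$, \emph{not} of $[K',[K',B]]$; the two are not obviously comparable, so the calculation you gesture at (commutators $[k',[k,b_j(u)]]$ with $k,k'\in K'$) does not obviously produce the generators $e_i([s,t])$ of $[B,B]$. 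Moreover, once you have somehow established $[B,B]\leq N$, the subsequent discussion of $B$-invariance of $N$ is superfluous, since $[K',B]=N\cdot[B,B]$ then gives $[K',B]=N$ immediately.

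The missing idea, and the paper's shortcut, is the Three Subgroups Lemma. Because $[J,B,J]=[J,[B,J]]=[J,[J,B]]=[J,J,B]$ (with right-associated commutators), the lemma gives $[B,J,J]\leq[J,J,B]$; if $J'=J$ this reads $[J,B]=[B,J']=[B,J,J]\leq[J,J,B]\leq[J,B]$, yielding $[J,B]=[J,J,B]$ in two lines. This entirely bypasses the lift-from-homology step and the unproved inclusion $[B,B]\leq[K',[K',B]]$. As written, your proof is incomplete precisely at the point you identify as the main obstacle; replacing the homological reduction by the Three Subgroups Lemma closes the gap and in fact shortens the argument. One further small point: the phrase ``equal indices forces $[K',B]=[K,B]$'' is not a valid inference for infinite $S$ (and the lemma is stated for an arbitrary group $S$); instead note that your computation identifies both $[K,B]$ and $[K',B]$ with the same kernel $\ker\bigl(B\to S^{\mathrm{ab}}\bigr)$, which gives equality directly.
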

\begin{proof}
Although $n > 1$ was assumed initially, the further assumptions give implicitly
that $n \geq 5$.  Write $e_i(s)$ for the element $(1,\dots,1,s,1,\dots,1) \in B
= S_1 \times \cdots \times S_n$ (with $s$ in the $i$-th place), and $c^i_j(s) =
e_{j}(s)e_i(s)^{-1}$.  Let $J$ be any transitive subgroup of $\Sigma_n$.  Then
$c^{i}_j(s) = [g,e_{i}(s)] \in [J,B]$ for each element $g \in J$ which sends
$i$ to $j$, so $c^i_{j}(S) \in [J,B]$ for each $i$ and $j$. For $i$ an index
taken modulo $n$ and for $s$, $t \in S$, 
\[
[c^{i-1}_i(s), c^{i}_{i+1}(t^{-1})] = e_{i}([s,t]).
\]
This shows that $[S_i,S_i] \leq [J,B]$ for each $i$, and hence $[B,B] = [S,S]^n
\leq [J,B]$. Under the same assumptions on $J$, we just saw $[J,B]$ contains
all $c^i_j(s)$ with $s \in S$ and $1 \leq i, j \leq n$. These generate
$\ker(\pi)$, where $\pi \colon B \to S/S'$ is the homomorphism sending an
element of $B$ to the product of its components.  Since each generating element
of $[J,B]$ is clearly in this kernel, we have $[J,B] = \ker(\pi)$. In
particular, $[B,B] \leq [K',B] = [K,B]$. 

Next, for any subgroup $J$, we have $[J,B,J] = [J,J,B]$.  So if $J' = J$, then
$[J,B] = [B, J] = [B, J'] = [B,J,J] \leq [J,J,B] \leq [J,B]$, the first
inclusion by the Three Subgroups Lemma \cite[Theorem~2.3(ii)]{Gorenstein1980}.
So $[J,J,B] = [J,B]$. In particular, $[K',K',B] = [K',B]$ since $K'$ was
assumed perfect.

Applying \cite[Theorem~2.1]{Gorenstein1980} for example to $\Gamma = KB$, we
see that $G = \Gamma' = K'[K,B][B,B]$, and then $G = K'[K',B]$ as $[B,B] \leq
[K,B] = [K',B]$. Keeping in mind that $[[K',B], [K',B]] \leq [B,B]$ since
$B/[B,B]$ is abelian, a similar argument gives $G' = [K',K'][K',K',B] = K'[K'B]
= G$, so $G$ is perfect. 
\end{proof}

\begin{proof}[Proof of Theorem~\ref{T:main}]
Let $A$ be any finite group. If $A = 1$, then we take $G = U = 1$, so we may
assume $A > 1$. Fix a fusion system $\F$ on a finite group $S = U \rtimes A$
with $U$ homocyclic and $A$ faithful on $U$, satisfying the conclusion of
Theorem~\ref{T:realfus-detailed}.  Let $X = \sum_{i = 1}^k S
\times_{(Q_i,\phi_i)} S$ be any left semicharacteristic biset for $\F$ as in
Theorem~\ref{T:leftsemichar}, and write $\iota\colon S \to \Gamma = \Aut({ }_1X)
\cong S \wr \Sigma_n$ for the Park embedding, so that $\F \cong
\F_{\iota(S)}(\Gamma)$ via $\iota$. Set $G = \Gamma'$. To ease notation, we
identify $S$ with its image in $\Gamma$, and so we identify $\F$ and
$\F_S(\Gamma)$.  By choice of $\F$, we know $S \cap B \leq Q(\F) = 1$ by
Lemma~\ref{L:capQi} where $B$ is the base subgroup of $\Gamma$ as usual, while
also $S = \foc(\F) \leq S \cap G$ by Remark~\ref{R:focal}.  Thus, $U \leq S
\leq G$. Since $\F = \F_S(\Gamma)$, we have $\Aut_\Gamma(U) = \Aut_\F(U) \cong
A$ again by choice of $\F$, and $\Aut_S(U) \cong A$ by construction.  Since $S
\leq G$, this shows $\Aut_{G}(U) \cong A$.  We want to verify that $G$
satisfies the conclusion of the theorem.  

Let $H$ be the alternating subgroup of $\Sigma_n$, the usual complement of $B$
in $\Gamma$, and let $N = \gen{U^G}$ be the normal closure of $U$ in $G$. We
will see below that $n \geq 5$, so $H$ is simple. By Lemma~\ref{L:wrcomm}, $G$
is perfect and $G = H[H,B]$. Thus, it remains to show that $N = G$. 

Recall from the discussion of the Park embedding that $n = \sum_{i = 1}^k
|S:Q_i|$, so by Theorem~\ref{T:realfus-detailed}(iii) and
Lemma~\ref{L:nonextendable}, there is $i$ such that $n \geq |S:Q_i| > 2|A|$. So
indeed, $n \geq 5$ and $H$ is simple.  Use Bertrand's postulate to get a prime
$p$ with $|A| < p < 2|A|$, and so a prime $p$ that divides $|H|$ but not $|A|$.
By Theorem~\ref{T:realfus-detailed}(ii), $p$ divides $|H|$ but not $|S|$, so
$|H|^2$ does not divide $|G|$. On the other hand, $U \cap B \leq S \cap B \leq
Q(\F) = 1$, so as $H$ is simple, $N$ projects modulo $B$ onto $H$. Thus $|H|$
divides $|N|$.  Since $N \cap H$ is normal in $H$, we have $H \leq N$ or $H
\cap N = 1$. In the latter case, $G$ contains the subgroup $HN$ of order
divisible by $|H|^2$, a contradiction, and hence $H \leq N$.  As $H \leq N$,
$N$ contains the normal closure of $H$ in $G$, which is $H[H,B] = G$, and this
completes the proof of the theorem. 
\end{proof}

\bibliographystyle{amsalpha}{ }
\bibliography{mybib}
\end{document}